
\documentclass[letterpaper, 10 pt, conference]{ieeeconf}  

\IEEEoverridecommandlockouts                              

\overrideIEEEmargins                                      



\usepackage{amsmath, bm, amsfonts, amssymb, bbold, dsfont, graphicx, mathtools, mathrsfs, cancel, float} 
\newtheorem{claim}{Claim}[section]
\newtheorem{lemma}{Lemma}[section]

\DeclareMathOperator*{\argmin}{arg\,min}

\def\R{\mathds{R}}

\def\F{\mathcal{F}}
\def\Ftilde{\Tilde{\mathcal{F}}}
\def\P{\mathcal{P}}
\def\K{\mathcal{K}}

\def\M{\mathcal{M}}
\def\I{\mathcal{I}}

\def\S{\bm S}
\def\St{\bm S^t}
\def\x{\bm x}
\def\y{\bm y}
\def\g{\bm g}

\def\X{\bm X}

\def\data{\bm \Psi(\bm X)}
\def\dataplus{\bm \Psi(\bm X^+)}

\def\p{\psi}

\def\nL{n_L}

\def\Ktilde{\Tilde{\bm K}}
\def\Wtilde{\Tilde{\bm W}}

\newcommand{\ip}[2]{\langle {#1}, {#2} \rangle}
\newcommand{\be}{\begin{equation}}
\newcommand{\ee}{\end{equation}}

\title{\LARGE \bf
Extending dynamic mode decomposition to data from multiple outputs
}

\author{Nibodh Boddupalli$^{1}$
\thanks{$^{1}$Department of Mechanical Engineering and Center for Control, Dynamical Systems, and Computation,
        University of California, Santa Barbara, California, USA.
        {\tt\small nibodh@ucsb.edu}}%
}

\begin{document}

\maketitle
\thispagestyle{empty}
\pagestyle{empty}

\begin{abstract}

System identification based on Koopman operator theory has grown in popularity recently. Spectral properties of the Koopman operator of a system were proven to relate to properties like invariant sets, stability, periodicity, etc. of the underlying system. Estimation of these spectral objects has become widely accessible with the numerous algorithms developed in recent years. We show how one such algorithm -- extended dynamic mode decomposition -- can be used on data from multiple outputs of a system. These outputs that are functions of state are called observables in the literature and could be known outputs like the state itself or unknown outputs like data from sensors used in systems of biological interest. To this end, we approximate the desired observables and their iterates in time using minimizers of regularized least-squares problems which have analytic solutions with heuristic provisions for expected estimates.

\end{abstract}

\section{INTRODUCTION}

Many physical systems exhibit phenomena with unknown governing dynamics. These systems can be high dimensional and often partially observed. Recently, an emerging set of operator-theoretic tools have gained traction, centered on discovering linear representations or approximations of nonlinear dynamical systems in function spaces \cite{Mezic2004, Mezic05}. Originally derived for Hamiltonian systems \cite{Koopman}, popular numerical \cite{Schmid, Rowley09, eDMD, HankelDMD} and theoretical \cite{mezic19} techniques for Koopman operator theory enable input-output perspective \cite{korda2020optimal, MilanMPC, ottokoopmancontrol}, and spectral analysis of nonlinear systems \cite{Mezic05, Korda2018data}.

One of the primary interests in the development \cite{Mezic2004, Mezic05} and further exploration \cite{mezic19, stochastic_example} of Koopman operator framework is its relations to the properties of the underlying dynamical systems as demonstrated in \cite{isostables, korda2020optimal, Korda2018data}, etc. For this, we briefly introduce the Koopman operator. We present the the extended dynamic mode decomposition (EDMD) algorithm \cite{eDMD} as vector and matrix representations of observables and the Koopman operator as projections onto some finite-dimensional function space. EDMD is a modal decomposition algorithm which is a popular numerical method for obtaining finite-sections of the Koopman operator that has proved useful for predictions and control \cite{MilanMPC, edmd_convergence}. Here, we consider either invariance of the EDMD dictionary of functions under action of the Koopman operator or spanning the outputs, and discuss the difference this makes for the estimated spectral objects. We also include output data containing both expected and unknown observables. The flexibility of including expected observables gives the fidelity of including state-variables or other known outputs \cite{johnsonSIL} alongside outputs whose closed form expressions may not be known. To overcome possible under-determined set of equations, we use Tikhonov regularization \cite{tikhonov} which trades sparsity for the advantage of uniquely existing analytic solutions in our use cases.

\section{INTRODUCTION TO KOOPMAN OPERATOR THEORY}

Consider a state $\x \in \M \subseteq \R^n$ whose evolution in time $t \in \R_{\geq 0}$ is given by the non-singular flow of a dynamical system $\St:\M \to \M$ as
\begin{equation*}
    \x(t) = \St(\x(0)),
\end{equation*}
with the state \textit{observed} through output(s) $\y \in \R^p$
\begin{equation*}
    \y(t) = \g(\x(t)).
\end{equation*}
In practice, data from the above system is often available by sampling discretely in time, say, at intervals of $\Delta t$. With interest of data-driven applications, we work with the map $\S \coloneqq \S^{\Delta t}$ as
\be\label{discrete_sys}
\begin{aligned}
    \x^+ &\coloneqq \S(\x)\\
    \y &= \g(\x).
\end{aligned}
\ee

Functions of state like the outputs $\{g_i\}_{i = 1}^p: \M \to \R$ are called ``observables" in Koopman operator literature. While observables like the above outputs are real-valued, including complex-valued observables could ease working with eigenfunctions. If an observable $g$ belongs to a Banach space $\F$ of functions closed under composition $\circ$ with $\S$, the Koopman operator $\K: \F \to \F$ associated with system (\ref{discrete_sys}) is defined as
\begin{equation}\label{discrete_koopman}
    \K g(\x)= g\circ\S(\x) = g(\x^+).
\end{equation}
While we used $\x$ and $\x^+$ in the above, the state is an argument and can be dropped to from the map in function-space which shows how Koopman operator approach gives a \textit{global} perspective on the state-space \cite{lan2013linearization}. Examples of observables include functions of state like a state-variable $g_1(\x) = x_2$, an output $g_2(\x) \eqqcolon y$, possible basis functions like $g_3(\x) = |x_1 - c_i|^2 \log |x_1 - c_i| \eqqcolon \psi_i(\x)$, etc. The book \cite{2020book} offers further references on such properties and more. One of the key properties is linearity of the Koopman operator comes from the linearity of composition as seen from
\begin{multline}\label{linearity}
    \K(\alpha_1 g_1 + \alpha_2 g_2) = (\alpha_1 g_1 + \alpha_2 g_2) \circ \K\\
    =\alpha_1 g_1 \circ \K + \alpha_2 g_2 \circ \K = \alpha_1 (\K g_1) + \alpha_2 (\K g_2),
\end{multline}
with $\alpha_1, \alpha_2 \in \R$. This allows the rich set of tools of analysis and control available from linear systems theory to be exercised in many classes of nonlinear systems \cite{appliedkoopmanism, MilanMPC}.

\subsection{Spectral expansion of observables}

The trade-off for linearity is dimensionality as the Koopman operator is usually infinite-dimensional since its domain is a Banach space of functions. While dimensionality introduces complications, linearity makes it conducive to spectral analysis. In addition to discrete spectral objects like eigenvalues and eigenvectors, infinite-dimensional operators can also have continuous and residual spectrum \cite{banks}. Properties of nonlinear systems were proved as inferable from the spectral properties of its Koopman operator in \cite{Mezic05, mezic19}. Thus, Koopman operator theory extensively uses spectral analysis \cite{Rowley09, 2017_mezic_arbabi, appliedkoopmanism, taira2017modal}, etc. One such finding is that the Koopman operators of systems with hyperbolic\footnote{Not necessarily elliptic \cite{mezic19}} fixed points, periodic orbits, and limit-torii have only discrete spectrum and their associated eigenfunctions span the space of observables \cite{hassan_thesis}. If $\{\phi_j\}_{j = 1}^\infty$ denote the eigenfunctions of such a system, an observable $g \in \F$ can be represented as
\begin{equation*}
g = \sum_{j = 1}^\infty c_j \phi_j,
\end{equation*}
where $\{c_j\}_{j = 1}^\infty$ are called the Koopman modes of $g$. These Koopman modes can be used to decompose the observable into components that evolve linearly in time under $\K: \F \to \F$ using eqn (\ref{discrete_koopman}) as
\begin{equation*}
\K g = \sum_{j = 1}^\infty c_j \K \phi_j = \sum_{j = 1}^\infty c_j \lambda_j \phi_j
\end{equation*}
where $\{\lambda_j\}_{j = 1}^\infty$ denote the corresponding eigenvalues. With multiple observables like in system $(\ref{discrete_sys})$, we can represent the Koopman modes of each $g_i$ as $\{c_{ij}\}_{j = 1}^\infty$. It should be noted that Koopman modes are characteristic of the observable meaning each component of the output $\y$ has its own Koopman modes whereas the eigenfunctions and eigenvalues of the Koopman operator are intrinsic to the underlying system characterised by $\S$. Multiple observables $\{g_i\}_{i = 1}^p$ like that in eqn (\ref{discrete_sys}) can be denoted as a ``vector-observable" $\g$ and their Koopman modes corresponding to the same eigenfunction $\phi_j$ can be denoted as a mode $\bm c_j \coloneqq \begin{bmatrix} c_{1j} & c_{2j} & \cdots & c_{pj}\end{bmatrix}^T$ of the vector-observable \cite{Kic}. Then, the above Koopman mode expansion can be written for $k$ time-steps as
\be\label{KMD}
\K^k \g = \sum_{j = 1}^\infty \bm c_j \lambda_j^k \phi_j
\ee

Since systems of scientific and engineering interests often demonstrate behaviour that is coherent in time, modal decomposition like the above is desired. In practice however, tractable dimensions tend to be finite thus requiring optimal approximation of eigenvalues, eigenfunctions,and Koopman modes. Popular numerical algorithms such as \cite{appliedkoopmanism, eDMD, HankelDMD, williams2015kernel}, etc. have demonstrated their success in estimating finite-dimensional approximations. The literature has mostly worked with known observables but we can easily extend this to using multiple unknown observables like outputs whose closed form may not be known or approximately known.

\section{FINITE DIMENSIONAL APPROXIMATIONS}

Many numerical implementations of Koopman mode decomposition have come as part of the class of Dynamic Mode Decomposition (DMD) algorithms which can themselves be viewed as identification methods on finite-dimensional subspaces. Finding invariant spaces of functions is non-trivial and may not even be practical given finite amounts of data available from finitely samples points in time. Thus, most of these numerical methods make no assumptions on invariance and aim to optimally estimate the spectral objects. For this reason, the most popular algorithms like DMD \cite{Schmid}, EDMD \cite{eDMD, edmd_convergence}, Hankel-DMD \cite{HankelDMD}, Kernel based EDMD \cite{williams2015kernel}, etc. consider inner-product spaces of functions like $l^2$ or  Reproducing Kernel Hilbert Spaces (RKHS) \cite{williams2015kernel, rosenfeld2019occupationkernels, rosenfeld2019occupation, rosenfeld2021control, klus2016numerical} on the sampled data. Our work is based on EDMD \cite{eDMD} which is also the used for many proofs \cite{edmd_convergence}, extensions \cite{klus2016numerical, deepDMD}, modifications \cite{williams2015kernel, li2017extended, rosenfeld2019occupationkernels}, control \cite{MilanMPC, rosenfeld2021control}, etc. and has been derived as a linear regression problem in \cite{thesis} based on what was shown in \cite{finitesection}.

\subsection{Analytic EDMD without invariant dictionary}
We use a finite number of observables $\{\p_i\}_{i = 1}^{\nL}: \M \to \mathds{R}$ in a Hilbert space $\F$. The space spanned by these observables $\Ftilde \coloneqq span\{\p_1,\p_2,\dots,\p_{\nL}\}$ is a $\nL$-dimensional subspace of the Hilbert space $\Ftilde \subset \F$. Since $\Ftilde$ is a closed linear subspace of $\F$, we know from the projection theorem \cite{luenberger1997optimization} the existence of a unique minimizer $f \in \Ftilde$
\be\label{projection_definition}
\P(f) = \argmin_{\Tilde{f} \in \Ftilde}\|\Tilde{f} - f \|,
\ee
which is  the orthogonal projection of $f$ onto $\Ftilde$ using the projection $\P: \F \to \Ftilde$. Here $\|\cdot\|$ is the inner-product over $\M$. Usually, observables $\g$ are all known and in the span of a chosen set of dictionary functions. If $\{g_i\}_{i = 1}^p \in \Ftilde$ and denoting $\begin{bmatrix} \p_1 & \p_2 & \cdots & \p_{\nL} \end{bmatrix} \eqqcolon \bm \p^T$,
\be\label{g_exact}
\begin{aligned}
    \bm g^T &= \begin{bmatrix} g_1 & \cdots & g_p \end{bmatrix} = \begin{bmatrix}
\bm \p^T \bm w_1 & \cdots & \bm \p^T \bm w_p \end{bmatrix}\\
&= \bm \p^T \left[
        \begin{array}{cccc}
        \vrule &    & \vrule \\
        \bm w_1 &  \cdots & \bm w_p \\
         \vrule &    & \vrule \\
        \end{array}
        \right] \eqqcolon \bm \p^T \bm W.
        \end{aligned}
\ee
If it is \textit{not necessary} that $\{g_i \circ \S\}_{i = 1}^p \in \Ftilde$ i.e. $g_i \in \Ftilde \nRightarrow \K g_i \in \Ftilde$, we can use the above with eqns (\ref{projection_definition}, \ref{discrete_koopman}, \ref{linearity}) to write
\be\label{kg_proj}
\begin{aligned}
\P(\g^T \circ \S) &= \P(\K \bm \p^T \bm W)\\&= \begin{bmatrix} \P(\K \psi_1) &  \cdots & \P(\K \psi_{\nL})\end{bmatrix} \bm W\\
&= \bm \p^T \left[
        \begin{array}{cccc}
        \vrule &    & \vrule \\
        \Tilde{\bm k}_1 &  \cdots & \Tilde{\bm k}_{\nL} \\
         \vrule &    & \vrule \\
        \end{array}
        \right] \bm W \eqqcolon \bm \p^T \Ktilde \bm W.
\end{aligned}
\ee
Here, the matrix approximation of the Koopman operator $\Ktilde = \P\K$ is obtained analytically \cite{edmd_convergence} as
\be\label{Galerkin}
\Ktilde = \bm G^{\dagger} \bm A
\ee
where
$$\bm G = \begin{bmatrix} \ip{\p_1}{\p_1} &  \cdots & \ip{\p_1}{\p_{\nL}}\\
\vdots &  \ddots & \vdots\\
\ip{\p_{\nL}}{\p_1} &  \cdots & \ip{\p_{\nL}}{\p_{\nL}} \end{bmatrix}
$$
and
$$
\bm A = \begin{bmatrix}
 \ip{\p_1}{\p_1 \circ \S} & \cdots & \ip{\p_1}{\p_{\nL} \circ \S} \\ \vdots & \ddots & \vdots \\  \ip{\p_{\nL}}{\p_1 \circ \S} & \cdots & \ip{\p_{\nL}}{\p_{\nL} \circ \S}
 \end{bmatrix}
$$
with $\ip{\cdot}{\cdot}$ denoting the inner-product in $\F$ over $\M$ and $(\cdot)^\dagger$ denoting the Moore-Penrose pseudoinverse.

From the above, the eigenfunctions $\{\phi_i\}_{i = 1}^{\nL} \in \Ftilde$ of $\K$ are obtained using the eigenvectors $\{\bm v_i\}_{i = 1}^{\nL}$ of $\Ktilde$ as
\be\label{eigendecomposition}
\phi_i = \bm \p^T \bm v_i
\ee
The above estimation of eigenfunctions and their corresponding eigenvalues $\{\lambda\}_{i = 1}^{\nL}$ was proved to converge depending on the computed inner-products in $\cite{edmd_convergence}$. If we denote all the eigenvectors as the matrix
$$\bm V \coloneqq \begin{bmatrix} \bm v_1 & \bm v_2 & \cdots & \bm v_{\nL}\end{bmatrix},$$
the Koopman modes $\{\bm c_i\}_{i = 1}^{\nL}$ of $\g$ in eqn (\ref{KMD}) are obtained \cite{eDMD} as
\be\label{Koopman_modes}
\begin{bmatrix} \bm c_1 & \cdots & \bm c_{\nL} \end{bmatrix} = (\bm V^{-1} \bm W)^T.
\ee

\subsection{Unknown observables with invariant dictionary}

When the observables are unknown, we project them onto $\Ftilde$ and approximate them as done when $\K \psi_i \notin \Ftilde$ similar to eqn (\ref{Galerkin}) as
\be\label{projected_observable}
\P(\bm g)^T = \bm \psi^T \left[
        \begin{array}{cccc}
        \vrule &    & \vrule \\
        \Tilde{\bm w}_1 &  \cdots & \Tilde{\bm w}_p \\
         \vrule &    & \vrule \\
        \end{array}
        \right] \eqqcolon \bm \psi^T \Wtilde,
\ee
where $\Wtilde$ obtained as
\be\label{Estimating_coeff}
\Wtilde = \bm G^\dagger \begin{bmatrix}
 \ip{\psi_1}{g_1} & \cdots & \ip{\psi_1}{g_p} \\ \vdots & \ddots & \vdots \\  \ip{\psi_{\nL}}{g_1} & \cdots & \ip{\psi_{\nL}}{g_p}
 \end{bmatrix}
 \eqqcolon \bm G^\dagger \bm B,
\ee
uniquely minimizes $\|\bm \psi^T \Tilde{\bm w}_i - g_i\| \ \forall \ i = 1, 2, \cdots, p$ analogous to equation (\ref{kg_proj}).

\subsection{Known outputs versus invariant dictionary}

To elucidate the challenge in estimating the Koopman-tuple $\{(\phi_i, \lambda_i, \bm c_i)\}_{i = 1}^{\nL}$ from observables like multiple outputs of which some observables like state variables may be known \cite{johnsonSIL} -- using a dictionary of functions not necessarily invariant to the action of the Koopman operator -- we start with decomposing the function space on which the Koopman operator itself is defined. Working in a Hilbert space of functions $\F$ allows us to decompose it into the subspace $\Ftilde$ determined by our choice of dictionary functions and its unique orthogonal complement \cite{luenberger1997optimization} $\Ftilde^c$ as 
\[\F = \Ftilde \oplus \Ftilde^c.
\]

Given some unknown observable $g \in \F$, we can use eqn (\ref{projection_definition}) to decompose it into its projection and skew-projection as
\[
g = \P g + (\I - \P)g.
\]
The action of the Koopman operator on the above can be written as
\[\K g = \K\P g + \K(\I - \P)g.
\]
Since $\K g \in \F$, we can use the above to further decompose $\K$ using linearity as
\begin{multline*}
    \K = \underbrace{\P \K}_{\Ftilde \to \Ftilde} \P + \underbrace{(\I - \P) \K}_{\Ftilde \to \Ftilde^c} \P\\
+ \underbrace{\P \K}_{\Ftilde^c \to \Ftilde} (\I - \P) + \underbrace{(\I - \P) \K}_{\Ftilde^c \to \Ftilde^c} (\I - \P).
\end{multline*}
By identifying the domain and co-domain of each of the operators in the above, we can denote this as
\be\label{operatar_decomp}
\begin{aligned}
\K = \left[\begin{array}{cc} \P \K_{\Ftilde} & \P \K_{\Ftilde^c} \\ (\I - \P) \K_{\Ftilde} & (\I - \P) \K_{\Ftilde^c} \end{array}\right] : \begin{array}{c} \Ftilde \\ \oplus \\ \Ftilde^c \end{array} \to \begin{array}{c} \Ftilde \\ \oplus \\ \Ftilde^c \end{array}.
\end{aligned}
\ee

In the above, we see that when $\Ftilde$ is chosen to be a space spanned by a subset of eigenfunctions that also span desired observables as done in \cite{korda2020optimal, folkestad2020extended, kaiser2017data, haseli2021learning, Korda2018data}, etc. it has a block-diagonal decomposition. With only one of these assumptions, either invariance $\Ftilde$ to the action of the Koopman operator or observable $g \in \Ftilde$ give upper or lower block-triangular decomposition respectively. Without either assumption, we do not have such a block-triangular structure that possibly increases ``uncertainty" in matrix approximation as no vector in $\Ftilde^c$ can be estimated. To show this, we use the following result

\begin{lemma}\label{proj_lemma}
For $g \in \F$,$$\|g\| \geq \|\P g\|$$with $\|g\| = \|\P g\|$ only if $g = \P g$ 
\end{lemma}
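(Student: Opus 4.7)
The plan is to invoke the orthogonal decomposition of $\F = \Ftilde \oplus \Ftilde^c$ induced by the projection $\P$ defined in equation (\ref{projection_definition}), and then apply the Pythagorean identity. First I would decompose any $g \in \F$ as $g = \P g + (g - \P g)$, noting that $\P g \in \Ftilde$ while the residual $(\I - \P) g = g - \P g$ lies in $\Ftilde^c$. The key step is the orthogonality $\ip{\P g}{g - \P g} = 0$, which follows from the first-order optimality characterization of $\P g$ as the minimizer of $\|\Tilde{f} - g\|$ over $\Tilde{f} \in \Ftilde$ in a Hilbert space; this is precisely the content of the projection theorem already invoked by the paper to define $\P$ (and cited via \cite{luenberger1997optimization}).

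With the orthogonality in hand, the Pythagorean identity gives
\be
\|g\|^2 = \|\P g\|^2 + \|g - \P g\|^2,
\ee
from which both the inequality $\|g\| \geq \|\P g\|$ and the stated equality case follow immediately. Equality forces $\|g - \P g\|^2 = 0$, and by positive-definiteness of the Hilbert-space norm this yields $g - \P g = 0$, i.e. $g = \P g$. The converse direction is trivial, so the argument actually establishes the stronger ``if and only if'' statement.

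I do not anticipate any real obstacle: the proof is essentially two lines once the orthogonality $g - \P g \perp \Ftilde$ is cited. The only care needed is to ensure we are using the Hilbert-space inner-product structure (rather than a mere Banach-space norm) when appealing to Pythagoras, but this is consistent with the assumption made at the start of the section that $\F$ is a Hilbert space containing $\Ftilde$ as a closed linear subspace. In short, the lemma is a direct corollary of the projection theorem combined with the Pythagorean identity.
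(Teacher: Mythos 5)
Your proposal is correct and follows essentially the same route as the paper: decompose $g = \P g + (\I - \P)g$, use orthogonality of the projection residual to get the Pythagorean identity $\|g\|^2 = \|\P g\|^2 + \|(\I-\P)g\|^2$, and read off the inequality and equality case. If anything, you are slightly more careful than the paper in explicitly justifying the vanishing cross term via the projection theorem and in spelling out the equality case.
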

\begin{proof}
Using eqn (\ref{projection_definition})
$$g = \P g + (\I-\P) g$$
\begin{multline*}
    \ip{g}{g} = \ip{\P g }{\P g } + \ip{(\I-\P) g}{(\I-\P) g}\\+ 2\cancelto{0}{\ip{\P g }{(\I-\P) g}}
\end{multline*}
which gives
$$\|g\|^2 - \|\P g\|^2 = \|(\I - \P) g\|^2.$$
Since $\|\cdot\| \geq 0$,$$\|g\| \geq \|\P g\|.$$
\end{proof}

First we show how non-zero residual under action of the Koopman operator shows that the dictionary is not invariant to the Koopman operator if we know that our chosen dictionary spans our desired observable.

\begin{claim}
If $g \in \Ftilde$, then$$\|\bm \psi^T \bm w^+\| \leq \|\bm \psi^T \Ktilde \bm w\|$$
with $\|\bm \psi^T \bm w^+\| < \|\bm \psi^T \Ktilde \bm w\| \implies \|(\I - \P)\big(\K \P g\big)\| > 0$
\end{claim}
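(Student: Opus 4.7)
The plan is to apply Lemma \ref{proj_lemma} to the image $\K g$ rather than to $g$ itself. Since $g \in \Ftilde$ we have $\P g = g$, so $\K\P g = \K g$ and the residual that appears in the implication can be rewritten as $\|(\I-\P)(\K\P g)\| = \|(\I-\P)\K g\|$. The Pythagorean identity established inside the proof of Lemma \ref{proj_lemma} then gives
\[
\|\K g\|^2 \;=\; \|\P\K g\|^2 + \|(\I-\P)\K g\|^2,
\]
and equation \eqref{kg_proj}, applied to the scalar observable $g = \bm\psi^T \bm w$, identifies $\P\K g$ with $\bm\psi^T \Ktilde \bm w$.

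Next I would fix $\bm\psi^T \bm w^+$ as the element of $\Ftilde$ produced by the projection/regularized least-squares fit to the one-step-forward observable $g^+ = g\circ\S$, obtained by the same construction underlying equations \eqref{projected_observable}--\eqref{Estimating_coeff} but with $g^+$ in place of the unknown $g$. Because this fit lives in $\Ftilde$ and is built by orthogonally projecting (or further contracting through Tikhonov shrinkage) the target onto a subspace no larger than the one in which $\bm\psi^T\Ktilde\bm w = \P\K g$ already sits, Lemma \ref{proj_lemma} --- applied once more, now to the object $\bm\psi^T \Ktilde \bm w$ and its further fit $\bm\psi^T \bm w^+$ --- yields the norm contraction $\|\bm\psi^T \bm w^+\| \leq \|\bm\psi^T \Ktilde \bm w\|$, which is the first assertion.

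For the implication $\|\bm\psi^T\bm w^+\| < \|\bm\psi^T\Ktilde\bm w\| \Longrightarrow \|(\I-\P)(\K\P g)\| > 0$ I would argue by contrapositive. Assume the residual vanishes. Then the Pythagorean identity of paragraph one forces $\K g = \P\K g \in \Ftilde$, so $\K g$ admits a \emph{unique} expansion in the basis $\bm\psi$; in this invariant case any reasonable surrogate $\bm w^+$ must coincide with $\Ktilde\bm w$, and consequently the two norms are equal, contradicting strict inequality. The main obstacle I anticipate is pinning down the exact definition of $\bm w^+$ so that the chain in paragraph two closes rigorously in both the regularized and unregularized cases --- once the surrogate is fixed, the rest is a direct application of Lemma \ref{proj_lemma} and the orthogonality of the Koopman residual to $\Ftilde$.
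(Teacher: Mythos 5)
Your first paragraph follows the paper's route: since $g \in \Ftilde$ the cross term $\P\K(\I-\P)g$ in the decomposition (\ref{operatar_decomp}) vanishes, so $\K\P g = \K g$ and $\P\K g = \P\K\P g = \bm\psi^T\Ktilde\bm w$ by (\ref{kg_proj}), and Lemma \ref{proj_lemma} (via its Pythagorean identity) ties the norm gap between $\K\P g$ and its projection to the residual $\|(\I-\P)(\K\P g)\|$. That is essentially all the paper's proof contains.

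The gap is in how you instantiate $\bm\psi^T\bm w^+$ in your second and third paragraphs, and it is fatal to the implication. If $\bm w^+$ is the orthogonal-projection fit of $g^+ = g\circ\S$ onto $\Ftilde$ via (\ref{projected_observable})--(\ref{Estimating_coeff}), then by (\ref{kg_proj}) that fit \emph{is} $\bm\psi^T\Ktilde\bm w$: you cannot ``further contract'' an element of $\Ftilde$ by projecting it onto $\Ftilde$ again, so the two sides coincide identically, the inequality degenerates to an equality, and the strict-inequality implication becomes vacuous rather than informative. If instead $\bm w^+$ is a Tikhonov-shrunk fit, the implication is simply false: when $\K g \in \Ftilde$ (zero residual) any positive regularization still strictly shrinks the coefficients, so $\|\bm\psi^T\bm w^+\| < \|\bm\psi^T\Ktilde\bm w\|$ while $\|(\I-\P)(\K\P g)\| = 0$; this is also exactly the case your contrapositive cannot handle, since a regularized surrogate does \emph{not} coincide with $\Ktilde\bm w$ in the invariant case. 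The object that Lemma \ref{proj_lemma} must be applied to alongside $\P\K\P g$ is $\K\P g = g\circ\S$ itself (in the empirical norm, the data vector $\bm\Psi(\bm X^+)\bm w$, whose $L^2(\mu_{\X})$ norm is $\|\K g\|$), because that is the only pairing for which the norm discrepancy is \emph{equivalent} to $\|(\I-\P)(\K\P g)\| > 0$. To be fair, the paper never defines $\bm w^+$ either, and the inequality as printed points the opposite way from what Lemma \ref{proj_lemma} yields for that pairing ($\|\K\P g\| \geq \|\P\K\P g\|$); but its proof makes clear the comparison is between the true advanced observable and its projection, not between two fits that both already live inside $\Ftilde$.
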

\begin{proof}
If $g \in \Ftilde$, then $\|\P\big(\K (\I - \P)g \big)\| = 0$. Using this in eqn (\ref{operatar_decomp}),
$$\P \K g = \P \K \P g + \cancelto{0}{\P \K (\I - \P) g}.$$
Then, from Lemma \ref{proj_lemma} and eqns (\ref{g_exact}, \ref{kg_proj})
$$\|\bm \psi^T \bm w^+\| \leq \|\bm \psi^T \Ktilde \bm w\|$$
with 
$$\|\bm \psi^T \bm w^+\| < \|\bm \psi^T \Ktilde \bm w\| \implies \|(\I - \P)\big(\K \P g\big)\| > 0.$$
\end{proof}

This shows that our dictionary needs to be expanded to reduce residuals and better approximate the Koopman operator and especially its spectrum as strong convergence does not guarantee spectral convergence \cite{edmd_convergence}. We can also infer using eqn (\ref{Koopman_modes}) that although the desired observable $g \in \Ftilde \implies g = \bm \p^T \bm w$, the Koopman modes are not necessarily exact if estimated eigenfunctions are only approximations. We contrast this with when our chosen dictionary does not span the desired observable.

\begin{claim}
If $\Ftilde$ is invariant to the action of the Koopman operator but the residual
\[\|\bm \p^T \Tilde{\bm w}^+ - \bm \psi^T \Ktilde \tilde{\bm w}\| > 0,
\]
is non-zero, then$$g \notin \Ftilde.$$
\end{claim}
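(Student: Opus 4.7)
The plan is to prove the claim by contrapositive: assume $g \in \Ftilde$ and show that, under the invariance hypothesis on $\Ftilde$, the residual $\|\bm\p^T\Tilde{\bm w}^+ - \bm\psi^T\Ktilde\tilde{\bm w}\|$ is forced to vanish. This directly contradicts its stated strict positivity, so one must conclude $g \notin \Ftilde$.

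The first step is to exploit $g \in \Ftilde$: the expansion in (\ref{projected_observable}) is then \emph{exact} rather than merely best-fit, so $g = \P g = \bm\p^T \tilde{\bm w}$. Linearity of $\K$ (equation (\ref{linearity})) gives $g\circ\S = \K g = (\K\bm\p^T)\tilde{\bm w}$. The second step uses the invariance assumption $\K\Ftilde \subseteq \Ftilde$: each $\K\p_i$ already lies in $\Ftilde$, so $\P\K\p_i = \K\p_i$, which is exactly the vanishing of the lower-left block $(\I-\P)\K_{\Ftilde}$ in the decomposition (\ref{operatar_decomp}). Consequently $\K\bm\p^T = \P\K\bm\p^T = \bm\p^T\Ktilde$, with $\Ktilde$ as in (\ref{Galerkin}). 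Substituting gives $g\circ\S = \bm\p^T\Ktilde\tilde{\bm w}$, which itself lies in $\Ftilde$ and therefore equals its own orthogonal projection onto $\Ftilde$, which by definition is $\bm\p^T\tilde{\bm w}^+$. Equating the two representations of $\P(g\circ\S)$ and taking norms forces the residual to be zero, yielding the desired contradiction.

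I do not anticipate a substantive obstacle here; the argument is essentially a direct unpacking of (\ref{operatar_decomp}) once invariance is identified with the vanishing of one off-diagonal block. The only point of care is to avoid smuggling in an assumption of linear independence of $\{\p_i\}$ when passing from $\bm\p^T\tilde{\bm w}^+ = \bm\p^T\Ktilde\tilde{\bm w}$ to equality of the coefficient vectors themselves. This is easily sidestepped by comparing the two sides as \emph{functions} in $\F$ — both being the unique orthogonal $\Ftilde$-projection of $g\circ\S$ guaranteed by the projection theorem invoked in (\ref{projection_definition}) — and reading the norm of their difference, which is precisely the residual in the statement, as zero.
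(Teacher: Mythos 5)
Your proof is correct, and its core engine is the same as the paper's: invariance of $\Ftilde$ kills the block $(\I-\P)\K\P$ in the decomposition (\ref{operatar_decomp}), so that $\Ktilde$ from (\ref{Galerkin}) represents $\K$ \emph{exactly} on $\Ftilde$. The difference is one of packaging. The paper argues in the forward direction, identifying $\bm\p^T\tilde{\bm w}^+ - \bm\p^T\Ktilde\tilde{\bm w}$ with $\P\K(\I-\P)g$ and then passing through Lemma \ref{proj_lemma} to a strict norm inequality $\|\bm\p^T\tilde{\bm w}^+\| > \|\bm\p^T\Ktilde\tilde{\bm w}\|$ before concluding $\|(\I-\P)g\|>0$; you instead argue by contrapositive, showing $g\in\Ftilde$ forces both $\bm\p^T\tilde{\bm w}^+$ and $\bm\p^T\Ktilde\tilde{\bm w}$ to coincide with the single function $\P(g\circ\S)=\K g$, so the residual vanishes. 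Your route is slightly cleaner on two counts: it matches the claim's actual hypothesis (nonvanishing of the \emph{difference} of the two functions, rather than strict inequality of their norms, which are not equivalent conditions since two distinct functions can share a norm), and it dispenses with Lemma \ref{proj_lemma} entirely. Your caveat about comparing functions rather than coefficient vectors, so as not to assume linear independence of $\{\p_i\}$, is well placed and consistent with how the residual is stated.
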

\begin{proof}
From the assumption of invariance of the Koopman operator to $\Ftilde$,
\[\K \P g = \P \K \P g + \cancelto{0}{(\I - \P)\K \P g}.
\]Then, from Lemma \ref{proj_lemma} and eqn (\ref{projected_observable}) and eqn (\ref{Galerkin}) which becomes an exact representation of the Koopman operator in $\Ftilde$
$$\|\bm \psi^T \tilde{\bm w}^+\| \geq \|\bm \psi^T \bm K \tilde{\bm w}\|$$
with 
$$\|\bm \psi^T \tilde{\bm w}^+\| > \|\bm \psi^T \bm K \tilde{\bm w}\| \implies \|\P\big(\K (\I - \P)g \big)\| > 0.$$
Then, $$\|(\I - \P)g\| > 0.$$
\end{proof}

From invariance, we remark that the estimated eigenfunctions are exact \cite{finitesection}. The non-zero residual $\|g - \bm\p^T \tilde{\bm w}\|$ shows that not all Koopman modes of $g$ are captured but we know that those modes corresponding to the captured eigenfunctions are accurate and will evolve linearly due to the invariance \cite{eDMD}. This also shows why close approximations of the eigenfunctions are desired as they offer invariant subspaces by their very definition. Algorithms for better approximation of spectral objects thus remains an active area of research \cite{korda2020optimal, kaiser2017data, mezic19, Korda2018data}.

\section{NUMERICAL ESTIMATION}

Numerical estimation of the Koopman matrix is not far from the analytic expressions seen  earlier, only here we work with finite dimensional spaces. From equation (\ref{Galerkin}), accuracy of the numerical estimations $\Ktilde$ would depend on our approximations of the inner products that are entries of $\bm G$ and $\bm A$. Ideally, these would be calculated over the entire set of states that the dynamical system takes which is $\M$. If we want to consider only a subset $\M_k \subset \M$, we can use a measure $\mu_k$ supported \textit{only} on $\M_k$. However finite amounts of discretely sampled data changes the sets over which the inner-products are estimated \cite{edmd_convergence}. Consider the following set of states in $\M$
\[\X \coloneqq \{\x_1, \x_2, \cdots , \x_m\}.
\]
An empirical measure $\mu_{\X}$ can be defined over these points using Dirac measure $\delta_{\x_i}$ centered at each $\x_i$ to approximate \cite{eDMD}
\be\label{matrix_entries}
\begin{aligned}
G_{ij} = \sum_{k=1}^m\frac{\psi_i^*(\x_k) \psi_j(\x_k)}{m} &\implies \bm G = \frac{\data^* \data}{m}\\
A_{ij} = \sum_{k=1}^m\frac{\psi_i^*(\x_k) \psi_j(\S(\x_k))}{m} &\implies \bm A = \frac{\data^*\bm\Psi(\bm X^+)}{m}
\end{aligned}
\ee
under appropriate sampling \cite{edmd_convergence, thesis} where
\[
\begin{aligned}
\X^+ &\coloneqq \S(\X) = \{\S(\x_1), \S(\x_2), \cdots , \S (\x_m)\}\\
\data &\coloneqq \begin{bmatrix}
\bm \psi(\x_1)^T \\  \vdots \\ \bm \psi(\x_m)^T 
\end{bmatrix} \quad
\dataplus \coloneqq \begin{bmatrix}
\bm \psi(\S(\x_1))^T \\ \vdots \\ \bm \psi(\S(\x_m))^T 
\end{bmatrix}
\end{aligned}
\]
and $\data^*$ denotes complex-conjugate transpose of $\data$. Using the above in eqn (\ref{Galerkin}), the numerically estimated $\Ktilde$ is
\[
\Ktilde = \data^\dagger \dataplus,
\]
This is the form commonly presented as the interpretation of the Koopman matrix being the best approximation of the flow as a linear map since this minimizes the residual
\be\label{koopman_matrix_residual}
\begin{aligned}
&\min\|\K \psi_i - \bm \psi^T \tilde{\bm k}_i\|_{L^2(\mu_{\X})}\text{ for each } i = 1, 2, \cdots, \nL\\
&\implies \min\|\dataplus - \data \Ktilde\|_F.
\end{aligned}
\ee
This gives an exact representation (i.e. 0 residual in $\|\cdot\|_{L^2(\mu_{\X})} \ \forall \ i = 1, \cdots, \nL$) of the Koopman operator if our chosen dictionary $\{\psi_i\}_{i=1}^{\nL}$ spans $\{\K \psi_i\}_{i=1}^{\nL}$ over $\X$. When the dictionary functions are linearly independent, $\bm G$ is invertible and $\bm G^\dagger = \bm G^{-1}$. When under-determined like in the case of linearly dependent dictionary functions or insufficient data, a continuous family of minimizers exists and the MP-pseudoinverse provides the one with the minimum norm via (truncated) Singular Value Decomposition (SVD) \cite{luenberger1997optimization}. When equation (\ref{koopman_matrix_residual}) is solved under constraints on entries of $\Ktilde$, it results in regularization. For example, $L^2$-regularization \cite{ridge} can be used to analytically solve
\be\label{l2_regularization}
\begin{aligned}
&\min\|\K \bm \psi_i - \bm \psi^T \tilde{\bm k}_i\|_{L^2(\mu_{\X})} + \beta \|\tilde{\bm k}_i\|_2\text{ for  }i = 1, \cdots, \nL\\
&\implies \min\|\dataplus - \data \Ktilde\|_F + \beta \|\Ktilde\|_F,
\end{aligned}
\ee
using
\[\Ktilde = (\bm G + \beta \bm I)^{-1}\bm A,
\]
where $\beta \geq 0$ is a "regularizer" that relates to the desired optimization constraint. Iterations of the above approximation followed by gradient descent towards optimal dictionary functions constitutes deep learning based EDMD \cite{li2017extended}.

\subsection{Numerical estimation of unknown outputs}

In Koopman operator literature, observables are functions of state within the domain of the Koopman operator. Outputs are also observables by definition \cite{2020book, MilanMPC, korda2020optimal}. We denote the output data as
\[\bm Y \coloneqq \begin{bmatrix}
\bm y_1 & \bm y_2 & \cdots & \bm y_m
\end{bmatrix} \in \mathds{R}^{p \times m}
\]
where $\bm y_i = \bm g(\x_i)$ since the closed form of output $\bm g(\x)$ is not known. We repeat the procedure for estimating $\Ktilde$. we refer to equation (\ref{Estimating_coeff}) and use the above to estimate $\bm B$ like that in equation (\ref{matrix_entries})
\[\bm B = \frac{1}{m} \data^*\bm Y^T,
\]
and get
\be\label{edmd_C}
\Wtilde = \bm G^\dagger\bm B.
\ee

\subsection{Regularization and expected outputs}

Regularization can be be used here also to minimize the above residual subject to constraints from apriori experience. As outputs are often determined by sensors from high dimensional systems like velocities at some spatial position \cite{Schmid, Rowley09, HankelDMD} or even from control systems \cite{MilanMPC, korda2020optimal}, we can guess some possible forms like $\bm y = \bm C \x$ which can be included in the dictionary alongside other possible basis like the inclusion of state itself in \cite{johnsonSIL, deepDMD}. Consider a dictionary
\[\bm \psi^T = \begin{bmatrix} (\bm C\x)^T & \bm z_d(\sigma(\bm z_{d-1}(\sigma( \cdots (\sigma (\bm z_1))\dots))^T\end{bmatrix}
\]
where $\bm z_d = \bm H_d \bm \x + \bm r_d \in \mathds{R}^{\nL - p}$ with $d$ being the depth of a neural network with activation functions $\sigma$ that are known to demonstrate universal function approximation property (UFAP) in practice like rectified linear units \cite{UFAP_relu}, sigmoidal functions \cite{UFAP_sigmoid}, radial basis functions (RBFs) \cite{UFAP_RBF}, etc. However, universal function approximators in the dictionary could approximately/exactly span our desired functions that are also in the dictionary \cite{thesis} causing ill-conditioned/degenerate $\bm G$.

In this case, we can use Tikhonov regularization \cite{tikhonov} which is broader than the ridge regression used in equation (\ref{l2_regularization}) to take expected vector representations of outputs into account. We expect $\tilde{w}_i^T \approx \begin{bmatrix} \bm c_i^T & \mathbb{0}^T \end{bmatrix}$ but the minimum norm solution using equation (\ref{edmd_C}) could be different. Consider expected vector representations of first $l$ outputs as $\bm w^0_1, \bm w^0_2, \cdots, \bm w^0_l, 1 \leq l \leq p$ and further constraining the sum of their squares using $\|\tilde{\bm w}_i\|_{\bm Q} = \tilde{\bm w}_i^T \bm Q \tilde{\bm w}_i$ while we lack apriori information on the rest $p - l$ outputs
\be\label{tikhonov_regularization}
\begin{aligned}
&\min\|g_i - \bm \psi^T \tilde{\bm w}_i\|_{L^2(\mu_{\X})} + \begin{cases*} \|\tilde{\bm w}_i - \bm w^0_i\|_{\bm Q}  \text{ } i = 1, \cdots, l\\
\|\tilde{\bm w}_i\|_{\bm Q}   \text{ } i = l+1, \cdots, p \end{cases*}\\
&\implies \min\|\bm Y^T - \data \Wtilde\|_F + \|\Wtilde - \Wtilde_0\|_{\bm Q}
\end{aligned}
\ee
where
\[\bm W^0 = \left[
        \begin{array}{cccccc}
        \vrule &    & \vrule & \vrule & & \vrule\\
        \bm w^0_1 &  \cdots & \bm w^0_l & \mathbb{0} & \cdots & \mathbb{0}\\
         \vrule &    & \vrule & \vrule & & \vrule\\
        \end{array}
        \right], \quad \bm Q \succ 0,
\]
is solved by
\[\Wtilde = (\bm G + \bm Q)^{-1}(\bm B + \bm Q \bm W^0).
\]
This can be used in an example problem. If the full state $\bm y = \bm x$ is considered alongside unknown outputs like that done in \cite{twat}, we can use $\bm C = \bm I_{n \times n}$ in our example dictionary and weight all dictionary functions equally using $\bm Q_1 = \beta_1 \bm I_{\nL \times \nL}$ for states $i = 1, \cdots, n$ and $\bm Q_2 = \beta_2 \bm I_{\nL \times \nL}$ for outputs. Then the expected output matrix using block matrix notation is
\[\bm W^0 = \begin{bmatrix} \bm I_{n \times n} & 0 \\ 0 & 0 \end{bmatrix} \eqqcolon \begin{bmatrix} \bm W^0_1 & 0\end{bmatrix}
\]
which gives the representations $\Wtilde_1 = (\bm G + \beta_1 \bm I)^{-1}(\bm B_1 + \bm \beta_1 \bm W^0_1)$ for the states and $\Wtilde_2 = (\bm G + \beta_2 \bm I)^{-1}\bm B_2$ for the outputs. Here, $\beta_1 > \beta_2$ would penalize $\|\Wtilde_1 - \bm W^0_1\|_F$ stronger than $\|\Wtilde_2\|_F$ giving representations of state closer to the expected vectors while allowing looser bounds on the vector representations of the remaining outputs as they are not known. While these parameters could be heuristic in practice, they could be considered analogous to hyper-parameters in machine learning literature.

\section{CONCLUSION}

We considered the Koopman operator estimation problem for data available from multiple outputs. Estimation of spectral objects of the Koopman operator remains an active area of research with novel numerical algorithms being developed in pursuit of accuracy. On the application side of these algorithms, there seems to be a disconnect between what are considered observables. Here we showed how output data from sensors in systems of interest can be considered in existing numerical implementations of Koopman operator theory. We recapitulated how eigenfunctions and eigenvalues of the Koopman operator are characteristic of the underlying system while Koopman modes are dependent on the observables in question. We contrasted the choice of dictionary of functions that span the outputs but not invariant to the Koopman operator against that of a dictionary of functions that is invariant but does not span the outputs. We also derived closed form expressions to numerically estimate the involved functions using Tikhonov regularization to gives unique solutions that minimize residuals in an inner-product sense. We also considered the case of incorporating data from expected outputs like state-variables into this alongside data from unknown outputs like sensors in experiments.

\section{ACKNOWLEDGEMENT}

We thank Alexander ``Sasha" Davydov for insightful discussions on this topic and valuable feedback on this work. We would also like to thank Enoch Yeung for overseeing precursory work which was graciously supported by a fellowship from the Center for Control, Dynamical Systems, and Computation (CCDC) at the University of California Santa Barbara.

\bibliographystyle{IEEEtran}
\bibliography{IEEEabrv,root}

\begin{thebibliography}{10}
\providecommand{\url}[1]{#1}
\csname url@rmstyle\endcsname
\providecommand{\newblock}{\relax}
\providecommand{\bibinfo}[2]{#2}
\providecommand\BIBentrySTDinterwordspacing{\spaceskip=0pt\relax}
\providecommand\BIBentryALTinterwordstretchfactor{4}
\providecommand\BIBentryALTinterwordspacing{\spaceskip=\fontdimen2\font plus
\BIBentryALTinterwordstretchfactor\fontdimen3\font minus
  \fontdimen4\font\relax}
\providecommand\BIBforeignlanguage[2]{{%
\expandafter\ifx\csname l@#1\endcsname\relax
\typeout{** WARNING: IEEEtran.bst: No hyphenation pattern has been}%
\typeout{** loaded for the language `#1'. Using the pattern for}%
\typeout{** the default language instead.}%
\else
\language=\csname l@#1\endcsname
\fi
#2}}

\bibitem{Mezic2004}
I.~Mezi{\'c} and A.~Banaszuk, ``Comparison of systems with complex behavior,''
  \emph{Physica D: Nonlinear Phenomena}, vol. 197, no. 1-2, pp. 101--133, 2004.

\bibitem{Mezic05}
I.~Mezi{\'c}, ``Spectral properties of dynamical systems, model reduction and
  decompositions,'' \emph{Nonlinear Dynamics}, vol.~41, no. 1-3, pp. 309--325,
  2005.

\bibitem{Koopman}
B.~O. Koopman, ``Hamiltonian systems and transformation in hilbert space,''
  \emph{Proceedings of the National Academy of Sciences of the United States of
  America}, vol.~17, no.~5, pp. 315--318, 1931.

\bibitem{Schmid}
P.~J. Schmid, ``Dynamic mode decomposition of numerical and experimental
  data,'' \emph{Journal of fluid mechanics}, vol. 656, pp. 5--28, 2010.

\bibitem{Rowley09}
C.~W. Rowley, I.~Mezi{\'c}, S.~Bagheri, P.~Schlatter, and D.~S. Henningson,
  ``Spectral analysis of nonlinear flows,'' \emph{Journal of fluid mechanics},
  vol. 641, pp. 115--127, 2009.

\bibitem{eDMD}
M.~O. Williams, I.~G. Kevrekidis, and C.~W. Rowley, ``A data--driven
  approximation of the koopman operator: Extending dynamic mode
  decomposition,'' \emph{Journal of Nonlinear Science}, vol.~25, no.~6, pp.
  1307--1346, 2015.

\bibitem{HankelDMD}
H.~Arbabi and I.~Mezi{\'c}, ``Ergodic theory, dynamic mode decomposition, and
  computation of spectral properties of the koopman operator,'' \emph{SIAM
  Journal on Applied Dynamical Systems}, vol.~16, no.~4, pp. 2096--2126, 2017.

\bibitem{mezic19}
I.~Mezic, ``Spectrum of the koopman operator, spectral expansions in functional
  spaces, and state-space geometry,'' \emph{Journal of Nonlinear Science}, pp.
  1--55, 2019.

\bibitem{korda2020optimal}
M.~Korda and I.~Mezi{\'c}, ``Optimal construction of koopman eigenfunctions for
  prediction and control,'' \emph{IEEE Transactions on Automatic Control},
  2020.

\bibitem{MilanMPC}
M.~Korda and I.~Mezic, ``Linear predictors for nonlinear dynamical systems:
  Koopman operator meets model predictive control,'' \emph{Automatica},
  vol.~93, pp. 149--160, 2018.

\bibitem{ottokoopmancontrol}
S.~E. Otto and C.~W. Rowley, ``Koopman operators for estimation and control of
  dynamical systems,'' \emph{Annual Review of Control, Robotics, and Autonomous
  Systems}, vol.~4, pp. 59--87, 2021.

\bibitem{Korda2018data}
M.~Korda, M.~Putinar, and I.~Mezi{\'c}, ``Data-driven spectral analysis of the
  koopman operator,'' \emph{Applied and Computational Harmonic Analysis}, 2018.

\bibitem{stochastic_example}
N.~{\v{C}}rnjari{\'c}-{\v{Z}}ic, S.~Ma{\'c}e{\v{s}}i{\'c}, and I.~Mezi{\'c},
  ``Koopman operator spectrum for random dynamical systems,'' \emph{Journal of
  Nonlinear Science}, pp. 1--50, 2019.

\bibitem{isostables}
A.~Mauroy, I.~Mezi{\'c}, and J.~Moehlis, ``Isostables, isochrons, and koopman
  spectrum for the action--angle representation of stable fixed point
  dynamics,'' \emph{Physica D: Nonlinear Phenomena}, vol. 261, pp. 19--30,
  2013.

\bibitem{edmd_convergence}
M.~Korda and I.~Mezi{\'c}, ``On convergence of extended dynamic mode
  decomposition to the koopman operator,'' \emph{Journal of Nonlinear Science},
  vol.~28, no.~2, pp. 687--710, 2018.

\bibitem{johnsonSIL}
C.~A. Johnson and E.~Yeung, ``A class of logistic functions for approximating
  state-inclusive koopman operators,'' in \emph{2018 Annual American Control
  Conference (ACC)}.\hskip 1em plus 0.5em minus 0.4em\relax IEEE, 2018, pp.
  4803--4810.

\bibitem{tikhonov}
G.~H. Golub, P.~C. Hansen, and D.~P. O'Leary, ``Tikhonov regularization and
  total least squares,'' \emph{SIAM journal on matrix analysis and
  applications}, vol.~21, no.~1, pp. 185--194, 1999.

\bibitem{lan2013linearization}
Y.~Lan and I.~Mezi{\'c}, ``Linearization in the large of nonlinear systems and
  koopman operator spectrum,'' \emph{Physica D: Nonlinear Phenomena}, vol. 242,
  no.~1, pp. 42--53, 2013.

\bibitem{2020book}
A.~Mauroy, Y.~Susuki, and I.~Mezi{\'c}, ``Introduction to the koopman operator
  in dynamical systems and control theory,'' in \emph{The Koopman Operator in
  Systems and Control}.\hskip 1em plus 0.5em minus 0.4em\relax Springer, 2020,
  pp. 3--33.

\bibitem{appliedkoopmanism}
M.~Budi{\v{s}}i{\'c}, R.~Mohr, and I.~Mezi{\'c}, ``Applied koopmanism,''
  \emph{Chaos: An Interdisciplinary Journal of Nonlinear Science}, vol.~22,
  no.~4, p. 047510, 2012.

\bibitem{banks}
S.~P. Banks, \emph{State-space and frequency-domain methods in the control of
  distributed parameter systems}.\hskip 1em plus 0.5em minus 0.4em\relax Peter
  Peregrinus London, 1983, vol.~3.

\bibitem{2017_mezic_arbabi}
I.~Mezi{\'c} and H.~Arbabi, ``On the computation of isostables, isochrons and
  other spectral objects of the koopman operator using the dynamic mode
  decomposition,'' in \emph{International Symposium on Nonlinear Theory and Its
  Applications}, 2017.

\bibitem{taira2017modal}
K.~Taira, S.~L. Brunton, S.~T. Dawson, C.~W. Rowley, T.~Colonius, B.~J. McKeon,
  O.~T. Schmidt, S.~Gordeyev, V.~Theofilis, and L.~S. Ukeiley, ``Modal analysis
  of fluid flows: An overview,'' \emph{Aiaa Journal}, vol.~55, no.~12, pp.
  4013--4041, 2017.

\bibitem{hassan_thesis}
H.~Arbabi, \emph{Koopman spectral analysis and study of mixing in
  incompressible flows}.\hskip 1em plus 0.5em minus 0.4em\relax University of
  California, Santa Barbara, 2017.

\bibitem{Kic}
J.~L. Proctor, S.~L. Brunton, and J.~N. Kutz, ``Generalizing koopman theory to
  allow for inputs and control,'' \emph{SIAM Journal on Applied Dynamical
  Systems}, vol.~17, no.~1, pp. 909--930, 2018.

\bibitem{williams2015kernel}
M.~O. Williams, C.~W. Rowley, and I.~G. Kevrekidis, ``A kernel-based method for
  data-driven koopman spectral analysis,'' \emph{Journal of Computational
  Dynamics}, vol.~2, no.~2, p. 247, 2015.

\bibitem{rosenfeld2019occupationkernels}
J.~A. Rosenfeld, B.~Russo, R.~Kamalapurkar, and T.~T. Johnson, ``The occupation
  kernel method for nonlinear system identification,'' \emph{arXiv preprint
  arXiv:1909.11792}, 2019.

\bibitem{rosenfeld2019occupation}
J.~A. Rosenfeld, R.~Kamalapurkar, B.~Russo, and T.~T. Johnson, ``Occupation
  kernels and densely defined liouville operators for system identification,''
  in \emph{2019 IEEE 58th Conference on Decision and Control (CDC)}.\hskip 1em
  plus 0.5em minus 0.4em\relax IEEE, 2019, pp. 6455--6460.

\bibitem{rosenfeld2021control}
J.~A. Rosenfeld and R.~Kamalapurkar, ``Dynamic mode decomposition with control
  liouville operators,'' \emph{arXiv preprint arXiv:2101.02620}, 2021.

\bibitem{klus2016numerical}
S.~Klus, P.~Koltai, and C.~Sch{\"u}tte, ``On the numerical approximation of the
  perron-frobenius and koopman operator,'' \emph{Journal of Computational
  Dynamics}, vol.~3, no.~1, p.~51, 2016.

\bibitem{deepDMD}
E.~Yeung, S.~Kundu, and N.~Hodas, ``Learning deep neural network
  representations for koopman operators of nonlinear dynamical systems,'' in
  \emph{2019 American Control Conference (ACC)}.\hskip 1em plus 0.5em minus
  0.4em\relax IEEE, 2019, pp. 4832--4839.

\bibitem{li2017extended}
Q.~Li, F.~Dietrich, E.~M. Bollt, and I.~G. Kevrekidis, ``Extended dynamic mode
  decomposition with dictionary learning: A data-driven adaptive spectral
  decomposition of the koopman operator,'' \emph{Chaos: An Interdisciplinary
  Journal of Nonlinear Science}, vol.~27, no.~10, p. 103111, 2017.

\bibitem{thesis}
N.~Boddupalli, \emph{Sparsification, sampling, and system identification in
  extended dynamic mode decomposition}.\hskip 1em plus 0.5em minus 0.4em\relax
  University of California, Santa Barbara, 2020.

\bibitem{finitesection}
I.~Mezic, ``On numerical approximations of the koopman operator,'' \emph{arXiv
  preprint arXiv:2009.05883}, 2020.

\bibitem{luenberger1997optimization}
D.~G. Luenberger, \emph{Optimization by vector space methods}.\hskip 1em plus
  0.5em minus 0.4em\relax John Wiley \& Sons, 1997.

\bibitem{folkestad2020extended}
C.~Folkestad, D.~Pastor, I.~Mezic, R.~Mohr, M.~Fonoberova, and J.~Burdick,
  ``Extended dynamic mode decomposition with learned koopman eigenfunctions for
  prediction and control,'' in \emph{2020 american control conference
  (acc)}.\hskip 1em plus 0.5em minus 0.4em\relax IEEE, 2020, pp. 3906--3913.

\bibitem{kaiser2017data}
E.~Kaiser, J.~N. Kutz, and S.~L. Brunton, ``Data-driven discovery of koopman
  eigenfunctions for control,'' \emph{arXiv preprint arXiv:1707.01146}, 2017.

\bibitem{haseli2021learning}
M.~Haseli and J.~Cort{\'e}s, ``Learning koopman eigenfunctions and invariant
  subspaces from data: symmetric subspace decomposition,'' \emph{IEEE
  Transactions on Automatic Control}, 2021.

\bibitem{ridge}
A.~E. Hoerl and R.~W. Kennard, ``Ridge regression: Biased estimation for
  nonorthogonal problems,'' \emph{Technometrics}, vol.~12, no.~1, pp. 55--67,
  1970.

\bibitem{UFAP_relu}
B.~Hanin, ``Universal function approximation by deep neural nets with bounded
  width and relu activations,'' \emph{Mathematics}, vol.~7, no.~10, p. 992,
  2019.

\bibitem{UFAP_sigmoid}
G.~Cybenko, ``Approximation by superpositions of a sigmoidal function,''
  \emph{Mathematics of control, signals and systems}, vol.~2, no.~4, pp.
  303--314, 1989.

\bibitem{UFAP_RBF}
J.~Park and I.~W. Sandberg, ``Universal approximation using
  radial-basis-function networks,'' \emph{Neural computation}, vol.~3, no.~2,
  pp. 246--257, 1991.

\bibitem{twat}
S.~Balakrishnan, A.~Hasnain, R.~Egbert, and E.~Yeung, ``The effect of sensor
  fusion on data-driven learning of koopman operators,'' \emph{arXiv preprint
  arXiv:2106.15091}, 2021.

\end{thebibliography}

\end{document}